\documentclass[12pt, a4paper]{amsart}
\usepackage[margin=1in]{geometry}
\usepackage{amsmath,amssymb,amsfonts, float}
\usepackage{hyperref}
\usepackage{cleveref}
\usepackage{graphicx}

\usepackage{xcolor}
\usepackage{multicol}
\usepackage{tabularx}
\usepackage{mathpazo}
\newtheorem{thm}{Theorem}[section] 
\newtheorem{prop}[thm]{Proposition}
\newtheorem{lem}[thm]{Lemma}
\newtheorem{cor}[thm]{Corollary}
\theoremstyle{definition}
\newtheorem{definition}[thm]{Definition}
\newtheorem{expl}[thm]{Example}

\newtheorem{rem}[thm]{Remark}

\numberwithin{equation}{section}

\newcommand{\F}{\mathbb{F}}

\DeclareMathOperator{\rank}{rank}
\DeclareMathOperator{\diag}{diag}
\DeclareMathOperator{\diam}{diam}
\DeclareMathOperator{\spn}{span}

\begin{document}
\title{On gcd-graphs over matrix rings}

\author{Dung Nguyen}
\address{Elmhurst University, Elmhurst, IL 60126, USA.}
\email{dnguy9448@365.elmhurst.edu}

\author{Tung T. Nguyen}
\address{Elmhurst University, Elmhurst, IL 60126, USA.}
\email{tung.nguyen@elmhurst.edu}

\keywords{Cayley graphs, gcd-graphs, matrix rings, finite fields}
\subjclass[2020]{Primary 05C50, 15B33, 16S50}
\begin{abstract}
Graphs defined over finite rings are well studied in the literature. The study of these graphs benefits from rich connections between several areas of mathematics, including number theory, algebra, combinatorics, and graph theory, and these connections often lead to interesting interactions between algebraic and combinatorial structures. In this article, we investigate gcd-graphs defined over matrix rings with coefficients in finite fields.  We show that these graphs exhibit several interesting graph-theoretic properties. Along the way, we also prove some results on the structure of matrix rings, which may be of independent interest.
\end{abstract}

\maketitle

\section{Introduction}
Graphs defined over finite rings are well studied in the literature, tracing their origins to the foundational work of Klotz and Sander on gcd-graphs (see, for example, \cite{unitary,anderson2021graphs, klotz2007some, nguyengcd2026}). Unlike graphs defined over abstract abelian groups, graphs defined over rings benefit from both the additive and multiplicative structures of the underlying ring. Moreover, the interplay between these two structures often gives rise to rich and interesting properties of the associated graphs. To set the stage for our subsequent analysis, we recall the definition of these graphs.
\begin{definition} \label{def:cayley}
Let $R$ be a finite ring and $S \subset R \setminus \{0\}$ be a symmetric subset (meaning that $S=-S$). The Cayley graph $\Gamma(R,S)$ is defined as follows:
\begin{enumerate}
    \item The vertex set is the set of elements in $R$.
    \item Two vertices $x, y \in R$ are adjacent if $x - y \in S$.
\end{enumerate}
We refer to $S$ as the generating set for $\Gamma(R,S)$. In practice, $S$ often has arithmetic origins. 
\end{definition}

One of the earliest and most extensively studied types of Cayley graphs over rings is the class of gcd-graphs, first introduced in \cite{klotz2007some}. We begin by recalling this fundamental concept. Let $n$ be a positive integer and $D$ a subset of proper divisors of $n$. The gcd-graph $G_n(D)$ is defined as follows: (1) the vertices of $G_n(D)$ are elements of the finite ring $\mathbb{Z}/n\mathbb{Z}$, and (2) two vertices $a, b$ are adjacent if $\gcd(a-b, n) \in D$. In this setting, the generating set is precisely the collection of elements of the form $du$, where $d \in D$ and $u \in (\mathbb{Z}/n\mathbb{Z})^{\times}$.

The concept of gcd-graphs can be naturally generalized to any finite ring in which we employ an equivalent characterization: a graph $\Gamma(R,S)$ is a gcd-graph if and only if $S$ is stable under the left and right actions of $R^{\times}$. That is, $S$ satisfies the condition $R^{\times} S R^{\times} = S$, where $R^{\times}$ denotes the group of units in $R$ (see \cite{pst-gcd-new-2,minavc2024gcd, nguyengcd2026, nguyen2026u, pst-gcd-new}).

In this paper, we focus on the case where the underlying ring $R$ is a matrix ring defined over a finite field. In this setting, we can utilize tools from linear algebra to address fundamental questions concerning the structure of the associated gcd-graphs. We summarize here some of our main results; for precise statements and detailed proofs, we refer the reader to the main body of the text.

Let $R=M_n(\F_q)$ where $\F_q$ is a finite field with $q$ elements. Let $S$ be a symmetric subset of $M_n(\F_q)$. Throughout the rest of the pape, we will assume that $n>1$ because in the case $n=1$, a gcd-graph is either a complete graph or an empty graph (additionally, certain rank subadditivity properties and non-commutative matrix decompositions we seek to investigate simply do not exist in a 1-dimensional space). For each $1 \leq k \leq n$, let $V_k$ be the subset of $R$ consisting of all matrices of rank $k.$
\begin{enumerate}
    \item $\Gamma(R,S)$ is a gcd-graph if and only if there exists a subset $I \subset \{1, 2, \ldots, n\}$ such that $S= \bigsqcup_{k \in I} V_k.$ We will write $\Gamma(M_n(\F_q), [k_1, k_2, \ldots, k_r])$ for $\Gamma(M_n(\F_q),S)$ where $k_1<k_2< \cdots <k_r$ are elements of $I.$ To avoid the trivial cases of empty graphs or complete graphs, we will assume that $1 \leq r \leq n-1$.
    \item The gcd-graph $\Gamma(M_n(\F_q), [k_1, k_2, \ldots, k_r])$ is connected with diameter equal to
        \[ \max \left\{2, \left\lceil \frac{n}{k_{r}} \right\rceil \right\}. \]
    \item The gcd-graph $\Gamma(M_n(\F_q), [k_1, k_2, \ldots, k_r])$ is not bipartite. 
    \item The gcd-graph $\Gamma(M_n(\F_q), S)$ is Hamiltonian. 
    \item The clique number of $\Gamma(M_n(\F_q), [1])$ is $q^n$.
    \item   The sum of all matrices in $M_n(\F_q)$ with fixed rank $k$ is the zero matrix.
\end{enumerate}

\begin{figure}[H]
\centering
\includegraphics[width=0.5 \linewidth]{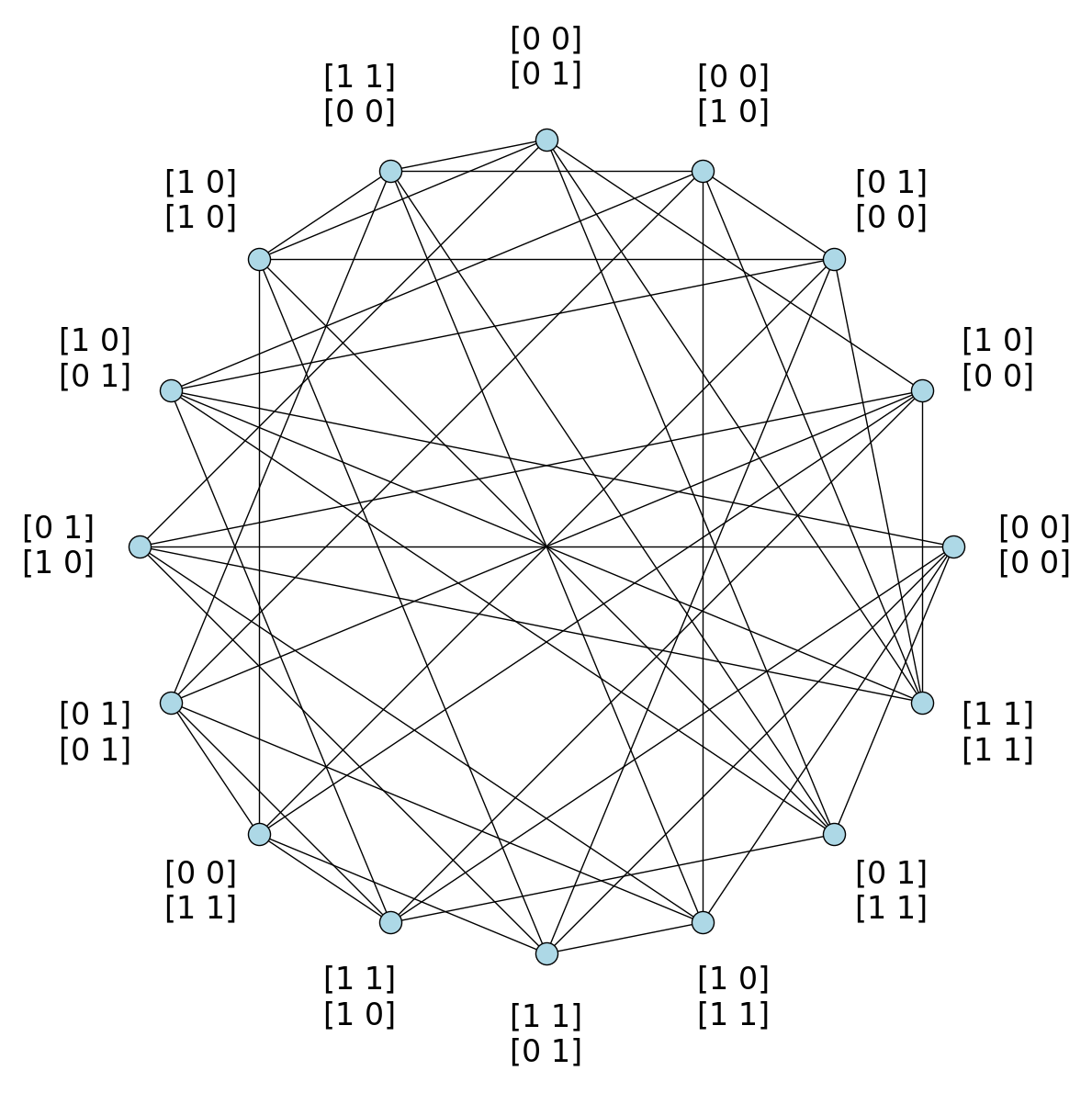}
\caption{An example of gcd-graph $\Gamma(M_2(\F_2),[2])$}
\label{fig:example1}
\end{figure}

\subsection*{Outline} 
In \cref{sec:background}, we provide some background on graph theory that we will need later in the text. In \cref{subsec:classification}, we give necessary and sufficient conditions for a Cayley graph over \( M_n(\F_q) \) to be a gcd-graph. These conditions are explicit and can be easily verified. In \cref{subsec:connectivity}, we describe the connectivity of gcd-graphs. In particular, we compute the distance between any two vertices. In the course of proving our results, we obtain some statements on representing a matrix as a sum of two matrices with fixed ranks---a topic that may be of independent interest. Other fundamental properties, such as bipartiteness, Hamiltonicity, and clique numbers, are discussed in \cref{subsec:bipartite} and \cref{subsec:rank1}. Finally, in \cref{subsec:sum}, we show that the sum of all matrices with fixed rank is \(0\), which leads to an interesting corollary on the walk structure of a gcd-graph.
\subsection*{Future work}
The study of gcd-graphs over matrix rings presents several additional questions that appear to be considerably more subtle. For example, while we determine the clique number of the graph $\Gamma(M_n(\mathbb{F}_q), [1])$, the corresponding problem for gcd-graphs of the form $\Gamma(M_n(\mathbb{F}_q), [k_1, k_2, \ldots, k_r])$ remains open. Other natural directions include the study of chromatic numbers and explicit spectral descriptions (for gcd-graphs over finite commutative rings, these are rather well-studied, see \cite{unitary, chen2022unitary, klotz2007some, nguyengcd2026}). We hope that the results in this paper provide a useful framework for investigating these questions.
\subsection*{Code} 
The code that we wrote to test our calculations and conduct numerical experiments on gcd-graphs can be found at \cite{githubrepo}. 

\section{Background on graph theory} \label{sec:background}
In this section, we recall some background in graph theory that we will need for the later part of this article. 
\subsection{Connectedness of Cayley graphs}
A Cayley graph \(\Gamma(R,S)\) is connected if and only if every element of \(R\) can be written as a sum of elements of \(S\). We may think of the generating set \(S\) as providing the ``bridges'' that connect vertices in \(R\). To illustrate this, suppose we want to find a walk between vertices \(a\) and \(b\) in \(R\). We can construct such a walk by moving through intermediate vertices:
\[
a \to x_1 \to x_2 \to \cdots \to x_{n-1} \to b.
\]
By the definition of a Cayley graph, each edge corresponds to an element of \(S\), so we have
\begin{align*}
x_1 - a &= s_1 \in S, \\
x_2 - x_1 &= s_2 \in S, \\
&\vdots \\
b - x_{n-1} &= s_n \in S.
\end{align*}
Summing these equations telescopically, we obtain
\[
b - a = s_1 + s_2 + \cdots + s_n,
\]
where each \(s_i \in S\). This argument also shows that the distance between \(a\) and \(b\) in the Cayley graph is the smallest integer \(n\) for which \(b-a\) can be expressed in this form.

\subsection{Diameter and clique number of a graph}
The diameter of a graph is the maximum length of the shortest-path between any two vertices, namely 
\[ \text{diam}(G)=\max_{u,v \in V(G)} d(u,v). \]
The clique number of a graph, denoted $\omega(G)$, is the number of vertices in its largest complete subgraph, where every vertex is connected to every other vertex in that subset.

\subsection{Bipartite graphs} 
A graph $G = (V, E)$ is called a bipartite graph if the vertex set $V$ can be partitioned into two disjoint subsets $U$ and $W$ such that every edge  $e \in E$ connects a vertex in $U$ to a vertex in $W$. Furthermore, for $u_1, u_2 \in U$ and $w_1, w_2 \in W$, $(u_1, u_2) \not \in E$ and $(w_1, w_2) \not \in E$. In other words, an edge in $V$ can't connect two vertices in the same partition. A graph is bipartite if and only if it does not contain an odd cycle.

\subsection{Hamiltonian graphs}
A Hamiltonian graph is a graph that contains a Hamiltonian cycle, which is a closed cycle that visits every single vertex in the graph exactly once before returning to the starting vertex.

\section{Gcd-graphs over matrix rings} \label{sec:gcd-graphs}
Let $F$ be a (finite) field. We first provide the necessary and sufficient conditions for the Cayley graph $\Gamma(M_n(F),S)$ to be a gcd-graph over $M_n(F).$ Recall that for each $1 \leq k \leq n$, $V_k$ is the subset of all matrices in $M_n(F)$ of rank $k.$ Additionally, let $I_n(k)$ denote the $n \times n$ matrix with rank $k$ that has entries 1 in the first $k$ diagonal positions and 0 elsewhere.

\subsection{Classification of gcd-graphs over a matrix ring} \label{subsec:classification}
In this section, we provide the necessary and sufficient conditions for a Cayley graph over $M_n(F)$ to be a gcd-graph. 

\begin{thm}
    Let $S$ be a subset of $M_n(F).$ Then $S$ is stable under the left and right action of $M_n(F)^{\times} = GL_n(F)$ if and only if $S$ is a disjoint union of some $V_k$'s; namely there exists $1 \leq k_1 < k_2 < \cdots <k_r$ such that 
    \[ S= \bigsqcup_{i=1}^r V_{k_i}.\]
\end{thm}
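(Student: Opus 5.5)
The plan is to identify the orbits of the two-sided action of $GL_n(F)\times GL_n(F)$ on $M_n(F)$, given by $(P,Q)\cdot A = PAQ^{-1}$, with the rank classes $V_0=\{0\},V_1,\dots,V_n$. Once we know each $V_k$ is exactly one orbit, the theorem is formal. A subset is stable under left and right multiplication by $GL_n(F)$ precisely when it is a union of orbits. Because the orbits partition $M_n(F)$, that union is automatically disjoint. (If $0\in S$ one also picks up $V_0$. For a generating set we have $0\notin S$, so only the $V_k$ with $k\ge 1$ occur, which explains the indexing $1\le k_1<\dots<k_r$.)

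First, rank is invariant under multiplication by invertible matrices on either side. If $P,Q\in GL_n(F)$, then $A\mapsto PAQ$ is a composition of isomorphisms on the column space, so $\rank(PAQ)=\rank(A)$. Hence each $V_k$ is stable under the action. This gives the direction ``$\Leftarrow$'': any union $\bigsqcup V_{k_i}$ is stable.

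Second, the main step is transitivity on each $V_k$. I would show that every $A\in V_k$ can be written as $A=P\,I_n(k)\,Q$ with $P,Q\in GL_n(F)$. The cleanest route is through linear maps. Regard $A$ as a map $F^n\to F^n$. Choose a basis $w_{k+1},\dots,w_n$ of $\ker A$, which has dimension $n-k$ by rank--nullity, and extend it by vectors $w_1,\dots,w_k$ to a basis of $F^n$. The images $Aw_1,\dots,Aw_k$ are linearly independent and span the image of $A$. Extend them to a basis $Aw_1,\dots,Aw_k,u_{k+1},\dots,u_n$ of $F^n$. If $Q^{-1}$ has columns $w_i$ and $P$ has columns $Aw_1,\dots,Aw_k,u_{k+1},\dots$, then $P^{-1}AQ^{-1}=I_n(k)$. (Equivalently, one could reach the same normal form by row and column reduction.)

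Third, we conclude. If $S$ is stable and $A\in S\cap V_k$, then every $B\in V_k$ satisfies $B=P'P^{-1}AQ^{-1}Q'$ for suitable invertible $P',Q'$, so $B\in S$. Hence $S\supseteq V_k$ whenever $S$ meets $V_k$. Setting $\{k_1<\dots<k_r\}=\{k: S\cap V_k\neq\emptyset\}$ then gives $S=\bigsqcup_i V_{k_i}$.

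The only step with real content is the normal form $A=P\,I_n(k)\,Q$, and even that is standard linear algebra. Everything else is bookkeeping about orbits.
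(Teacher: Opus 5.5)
Your proposal is correct and follows the same route as the paper. Sufficiency comes from rank invariance under $A \mapsto PAQ$, and necessity from the normal form $A = P\,I_n(k)\,Q$, which makes each $V_k$ a single two-sided orbit.

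You also go slightly beyond the paper in two places. You actually justify the normal form, using a basis extended from one of $\ker A$, where the paper only asserts it. And you point out that if $0 \in S$, then $V_0 = \{0\}$ enters the union; the paper's indexing $k_1 \geq 1$ tacitly excludes this case, because generating sets omit $0$.
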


\begin{proof}
    For sufficiency, let $S$ be a disjoint union of some sets $V_{k_i}$ and $A \in S$. By definition, there exists $k \in \{k_1,k_2,\ldots,k_r\}$ such that $ \rank(A) = k.$ Let $P,Q \in GL_n(F)$ be two arbitrary invertible matrices. We note that multiplication by an invertible matrix corresponds to performing elementary row or column operations, which do not alter the rank of a matrix. Hence,
    \[\rank(PAQ) = \rank(A) = k.\]
    By definition, $PAQ \in V_k$. Since $V_k \subseteq S$, $PAQ \in S$. Thus, $S$ is stable under the left and right action of $GL_n(F)$.\par
    For necessity, let $S \subseteq M_n(F)$ be stable under the left and right action of $GL_n(F)$. Suppose $A \in S$ and $B$ is an arbitrary matrix such that $\rank(A)=\rank(B)=k$. Since left and right multiplication by an invertible matrix do not alter the rank of a matrix, there exist $P_1, Q_1, P_2, Q_2 \in GL_n(F)$ such that $P_1 A Q_1=I_n(k)$ and $B = P_2 I_n(k) Q_2$. It follows that $B = P_2 P_1 A Q_1 Q_2.$ Let $P=P_2P_1$ and $Q=Q_1Q_2$. Since $P_1, Q_1, P_2, Q_2 \in GL_n(F)$, $P,Q \in GL_n(F)$. Thus, if there exist at least one rank $k$ matrix in $S$, the entire set $V_k$ must also be a subset of $S$. We conclude that there exists $1 \leq k_1 <k_2 < \cdots < k_r$ such that $S= \bigsqcup_{i=1}^r V_{k_i}.$
\end{proof}
We have an immediate corollary. 
\begin{cor} \label{cor:generating_set}
    The Cayley graph $\Gamma(M_n(F),S)$ is a gcd-graph if and only if there exists $1 \leq k_1 < k_2 < \cdots <k_r$ such that 
    \[S= \bigsqcup_{i=1}^r V_{k_i}.\]
\end{cor}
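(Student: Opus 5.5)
The plan is to reduce \cref{cor:generating_set} directly to the preceding theorem by unwinding the definition of a gcd-graph. In the introduction, a Cayley graph $\Gamma(R,S)$ is a gcd-graph exactly when $S$ is a symmetric subset of $R\setminus\{0\}$ with $R^{\times} S R^{\times} = S$. For $R = M_n(F)$ we have $R^{\times} = GL_n(F)$, so the gcd-graph condition says precisely that $S$ is stable under the left and right actions of $GL_n(F)$. The theorem characterizes these sets, so both directions follow once the side conditions on $S$ are checked.

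For the forward direction, I will assume $\Gamma(M_n(F),S)$ is a gcd-graph. Then $GL_n(F)\, S\, GL_n(F) = S$, and the theorem gives $S = \bigsqcup_{i=1}^r V_{k_i}$. Because $S \subset R\setminus\{0\}$ and $V_0 = \{0\}$, every index satisfies $k_i \geq 1$, which matches the indexing $1 \le k_1 < \cdots < k_r$ in the statement.

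For the converse, I will take $S = \bigsqcup_{i=1}^r V_{k_i}$ with every $k_i \ge 1$. Then $0 \notin S$. The set $S$ is symmetric because $-A = (-I_n)A$ with $-I_n \in GL_n(F)$, so $\rank(-A) = \rank(A)$; alternatively, symmetry follows from the invariance under $GL_n(F)$ given by the theorem. Hence $S$ is a legitimate generating set in the sense of \cref{def:cayley}. By the sufficiency part of the theorem, $S$ is bi-invariant under $GL_n(F)$, so $\Gamma(M_n(F),S)$ is a gcd-graph.

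I expect no real obstacle here. The only points needing care are verifying that the sets produced by the theorem meet the standing requirements of a Cayley generating set, namely symmetry and $0 \notin S$. Both reduce to the observation that rank is unchanged under multiplication by $-I_n$ and that the zero matrix is the only matrix of rank $0$.
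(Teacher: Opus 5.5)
Your proposal is correct and takes the same approach as the paper, which states this corollary without proof as an immediate consequence of the preceding theorem: the gcd-graph condition $R^{\times} S R^{\times} = S$ with $R^{\times} = GL_n(F)$ is exactly $GL_n(F)$-bi-invariance. Your explicit checks are details the paper leaves implicit, and you handle them correctly: $0 \notin S$ forces every $k_i \geq 1$, and $S$ is symmetric because $-I_n \in GL_n(F)$.
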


\begin{cor}
    $\Gamma(M_n(F), [k_1, \ldots, k_r])$ is a regular graph of degree 
    \[|S| = \sum_{i=1}^r |V_{k_i}|=\sum_{i=1}^r \left( \prod_{j=0}^{k_i-1} \frac{(q^n-q^j)^2}{q^{k_i}-q^j} \right).\]
    Here $q$ is the order of the finite field $F.$
\end{cor}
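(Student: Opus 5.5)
Regularity is immediate: $\Gamma(M_n(F),S)$ is a Cayley graph, so every vertex $x$ has neighborhood $x+S$, of size $|S|$. By \cref{cor:generating_set}, $S=\bigsqcup_{i=1}^r V_{k_i}$ is a disjoint union, so $|S|=\sum_i |V_{k_i}|$. It remains to show that
\[ |V_k| = \prod_{j=0}^{k-1}\frac{(q^n-q^j)^2}{q^k-q^j}. \]

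I will count $V_k$ by factoring a rank-$k$ matrix through its column space. Each $A\in V_k$ determines its column space $W=\operatorname{im}(A)$, a $k$-dimensional subspace of $F^n$. Conversely, fix such a $W$ with a basis, stored as the columns of an $n\times k$ matrix $B$. The matrices $A$ with $\operatorname{im}(A)=W$ are then exactly $A=BC$ with $C$ a $k\times n$ matrix of rank $k$. Moreover, $C$ is uniquely determined by $A$, since $B$ has full column rank.

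This gives
\[ |V_k| = \#\{k\text{-dim subspaces of } F^n\}\cdot \#\{k\times n \text{ matrices of rank } k\}. \]
The second factor counts ordered $k$-tuples of linearly independent row vectors in $F^n$. There are $q^n-q^j$ choices for the $(j+1)$-st row, which must avoid the span of the previous $j$ rows, so the factor is $\prod_{j=0}^{k-1}(q^n-q^j)$.

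For the first factor, each $k$-dimensional subspace has exactly $\prod_{j=0}^{k-1}(q^k-q^j)=|GL_k(F)|$ ordered bases, by the same count. The number of ordered linearly independent $k$-tuples in $F^n$ is again $\prod_{j=0}^{k-1}(q^n-q^j)$. Dividing gives the Gaussian binomial
\[ \prod_{j=0}^{k-1}\frac{q^n-q^j}{q^k-q^j}. \]
Multiplying the two factors yields the stated formula.

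The only delicate point is the bijection between matrices $A$ with fixed image $W$ and full-rank matrices $C$. Surjectivity holds because each column of $A$ lies in $W$ and is therefore $B$ times a unique coordinate vector. Full rank of $C$ follows from $\rank(BC)=\rank(C)$ when $B$ has full column rank. Everything else is the standard count of linearly independent tuples.
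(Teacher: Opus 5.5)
Your proof is correct. It follows the paper's structure: regularity from the Cayley-graph structure, and $|S|=\sum_i |V_{k_i}|$ from the disjoint union in \cref{cor:generating_set}.

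The difference lies in the count of $V_k$. The paper does not prove this count; it imports it as \cite[Formula 3]{Migler01102006}. You derive it by factoring each rank-$k$ matrix through its column space. You write $A=BC$, where $B$ is a fixed basis matrix of $W=\operatorname{im}(A)$ and $C$ is a $k\times n$ matrix of rank $k$. This splits $|V_k|$ as the number of $k$-dimensional subspaces of $F^n$ times the number of $k\times n$ matrices of rank $k$, that is, $\prod_{j=0}^{k-1}\frac{q^n-q^j}{q^k-q^j}\cdot\prod_{j=0}^{k-1}(q^n-q^j)$.

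The paper's route is shorter but treats the formula as a black box. Yours makes the corollary self-contained and explains the squared numerator: one factor $(q^n-q^j)$ comes from choosing the image $W$, and the other from choosing the rank-$k$ matrix $C$. Your bijection check is complete. Injectivity of $B$ gives both the uniqueness of $C$ and $\rank(BC)=\rank(C)$. Combined with $\operatorname{im}(BC)\subseteq W$ and equal dimensions, this forces $\operatorname{im}(BC)=W$.
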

\begin{proof}
    Recall from Definition~\ref{def:cayley} that in $\Gamma(M_n(F), S)$, a matrix $A$ is adjacent to all matrices $B$ such that $B-A \in S$. For each matrix $X \in S$, there exists a corresponding matrix $B$ such that $B-A=X$.  Hence, $\Gamma(M_n(F), S)$ is a regular graph of degree $|S|$.
    We have shown in Corollary~\ref{cor:generating_set} that $S$ must be a disjoint union of some $V_k$'s. By \cite[Formula 3]{Migler01102006}, we know that
    \[|V_k| = \prod_{j=0}^{k-1} \frac{(q^n-q^j)^2}{q^k-q^j}.\]
    Therefore, $\Gamma(M_n(F), [k_1, \ldots, k_r])$ is a regular graph of degree 
    \[|S| = \sum_{i=1}^r |V_{k_i}|=\sum_{i=1}^r \left( \prod_{j=0}^{k_i-1} \frac{(q^n-q^j)^2}{q^{k_i}-q^j} \right).\]
\end{proof}

\subsection{Automorphisms of a gcd-graph over a matrix ring}
In this section, we describe some automorphisms of a gcd-graph defined over the matrix ring $M_n(F).$
\begin{prop} \label{prop:automorphism}
    Let $G:=\Gamma(M_n(F), S)$ be a gcd-graph defined over $M_n(F).$ 
    \begin{enumerate}
        \item Let $A \in M_n(F).$ Then the translation map $t_{A}: G \to G$ defined by $t_A(X)=A+X$ is a graph automorphism of $G.$
        \item Let $P \in GL_n(F).$ Then the left dilation map $\tau_{P}: G \to G$ defined by $\tau_P(X)=PX$ is a graph automorphism of $G.$ Similarly, the right dilation map $\gamma_P: G \to G$ defined by $\gamma_{P}(A)=AP$ is also a graph automorphism of $G.$
    \end{enumerate}
\end{prop}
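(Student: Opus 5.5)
Each of the three maps is a bijection of $M_n(F)$ onto itself, so the plan is to check two things for each: bijectivity, and that both it and its inverse preserve adjacency. Since the graph is defined by the condition $Y - X \in S$, it suffices to show that each map sends the difference $Y - X$ to an element of $S$ exactly when $Y - X$ itself lies in $S$.

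For the translation $t_A$, bijectivity is immediate because $t_{-A}$ is a two-sided inverse. Adjacency is preserved since
\[ t_A(Y) - t_A(X) = (A+Y) - (A+X) = Y - X, \]
so $t_A(X)$ and $t_A(Y)$ are adjacent if and only if $X$ and $Y$ are. This part uses nothing about $S$ beyond the definition of a Cayley graph.

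For the left dilation $\tau_P$ with $P \in GL_n(F)$, the inverse is $\tau_{P^{-1}}$, so $\tau_P$ is a bijection. Linearity of matrix multiplication gives $\tau_P(Y) - \tau_P(X) = P(Y-X)$. Here the gcd-graph hypothesis enters. By Corollary~\ref{cor:generating_set}, $S$ is a union of rank classes $V_{k_i}$, and equivalently, by the preceding theorem, $S$ is stable under left and right multiplication by $GL_n(F)$. Hence $Y - X \in S$ implies $P(Y-X) \in S$. Conversely, if $P(Y-X) \in S$, then $Y - X = P^{-1}\bigl(P(Y-X)\bigr) \in S$. One could also argue directly that $\rank(P(Y-X)) = \rank(Y-X)$, so $Y - X$ and $P(Y-X)$ lie in the same $V_k$. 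The right dilation $\gamma_P$ is handled identically, using $(Y-X)P$ and right stability.

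None of these steps is a real obstacle. The only point needing care is that adjacency must be preserved in both directions, that is, non-edges must map to non-edges. This is exactly why I use invertibility of $P$, through $P^{-1}$ or through the fact that rank is preserved, rather than mere closure of $S$ under multiplication.
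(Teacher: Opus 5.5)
Your proof is correct and takes the same approach as the paper. Translations preserve differences, and dilation by $P \in GL_n(F)$ sends $Y-X$ to $P(Y-X)$ (or $(Y-X)P$), which lies in $S$ exactly when $Y-X$ does because $S$ is stable under the two-sided action of $GL_n(F)$; you simply spell out the bijectivity and the two-directional adjacency check that the paper leaves implicit.
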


\begin{proof}
    The first property is straightforward. The second property follows from the fact that the generating set $S$ is stable under the left and right action of $GL_n(F).$
\end{proof}

\subsection{Connectivity of gcd-graphs over a matrix ring} \label{subsec:connectivity}

In this section, we study the connectivity of a gcd-graph defined over $M_n(F).$ In order to do this, we first recall the following definition. 
\begin{definition} \label{def:set_addition}
    For two subsets $X, Y$ of $M_n(F)$ we define 
    \[ X+ Y = \{x+y \mid x \in X, y \in Y \}.\]
\end{definition}
Before we explain the results and proofs in detail, let us provide an overview of our approach, which expands on the work in \cite[Lemma 5.1]{guterman2021} and is based on the theory of Gaussian elimination together with certain combinatorial arguments. More precisely, in Lemma~\ref{lem:r<m}, Lemma~\ref{lem:r>m}, and Theorem~\ref{thm:sum_rank_sets}, we determine when an \(n \times n\) matrix \(A\) of rank \(r\) can be written as a sum of two \(n \times n\) matrices of ranks \(m\) and \(k\).

We first remark that any such matrix \(A\) can be expressed as \(A = P I_n(r) Q\), where \(P\) is a product of elementary matrices corresponding to row operations, \(Q\) is a product of elementary matrices corresponding to column operations, and \(r\) is the rank of \(A\). Since multiplication by \(P\) and \(Q\) preserves rank, it suffices to study when \(I_n(r)\) can be written as a sum of two matrices from \(V_m\) and \(V_k\). This reduction follows from the observation that
\[
A = P I_n(r) Q = P(X_m + X_k)Q = P X_m Q + P X_k Q,
\]
where \(X_m, P X_m Q \in V_m\) and \(X_k, P X_k Q \in V_k\).

Let \(\diag(d_1, d_2, \ldots, d_n)\) denote the \(n \times n\) diagonal matrix with entries \(d_1, d_2, \ldots, d_n\) on the diagonal and zeros elsewhere. Let \(E_{x,y}\) denote the \(n \times n\) matrix with a \(1\) in the \((x,y)\)-entry and zeros elsewhere. Without loss of generality, we will assume that $m \geq k.$
\begin{lem} \label{lem:r<m}
    If $m-k \leq r \leq m$, any $n \times n$ matrix with rank $r$ is a sum of an $n \times n$ matrix with rank $m$ and an $n \times n$ matrix with rank $k$.
\end{lem}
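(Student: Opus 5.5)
By the reduction explained before the statement, it suffices to treat the single matrix $I_n(r)$. We must write $I_n(r)=X+Y$ with $\rank X=m$ and $\rank Y=k$, where $m\ge k$ and $m-k\le r\le m\le n$. The plan is an explicit construction using diagonal matrices together with a few off-diagonal units $E_{x,y}$. The guiding idea is that $X$ carries the first $r$ diagonal ones and some extra rank. The matrix $Y$ then cancels exactly that extra rank and contributes whatever additional rank is still needed.

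Set $s=m-r$, so $0\le s\le k$ because $r\ge m-k$. Since $r+s=m\le n$, the $s$ positions $r+1,\dots,r+s$ lie inside the matrix. First let
\[
X=I_n(r)+\sum_{i=r+1}^{r+s}E_{i,i},
\]
which is $I_n(m)$ and has rank $m$. Then let $Y_0=-\sum_{i=r+1}^{r+s}E_{i,i}$. This has rank $s$ and satisfies $X+Y_0=I_n(r)$, which settles the case $k=s$.

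When $k>s$, we must raise the rank of $Y$ from $s$ to $k$ without changing $X+Y$. To do so, we move $k-s$ units of rank from $X$ into $Y$ along rows and columns not yet used by $Y_0$. The cleanest device is to write some of the diagonal ones of $I_n(r)$ as $2-1$, or as $(1+a)+(-a)$ with $a\ne 0,-1$. Concretely, for $t=k-s$ of the indices $i\le r$ (this requires $t\le r$), replace $X_{ii}=1$ by $1-c$ and put $Y_{ii}=c$. Here $c\ne 0,1$, so both entries stay nonzero and the rank of $X$ is unchanged. Then $\rank Y=s+t=k$.

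This works whenever $|F|>2$ and $k-s\le r$, that is, $k\le m$, which holds. Over $\F_2$ no such $c$ exists, and this is the main obstacle. There I would instead use $2\times 2$ blocks. For a pair of indices $i,j\le r$, write $I_2$ as
\[
\begin{pmatrix}1&1\\0&1\end{pmatrix}+\begin{pmatrix}0&1\\0&0\end{pmatrix},
\]
a rank-$2$ matrix plus a rank-$1$ matrix. For a single index, use the spare zero rows and columns outside the first $m$ coordinates, writing $E_{ii}=(E_{ii}+E_{ij})+(-E_{ij})$ when such room exists. In general I would handle parity by mixing these $2\times 2$ blocks with such off-diagonal splittings, choosing entries in unused rows and columns so that the ranks add blockwise.

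The bookkeeping step is to check that the rank contributions are blockwise additive, since the blocks occupy disjoint row and column sets, and that the required number of blocks fits inside the available indices. The fitting uses exactly the hypotheses $r\ge m-k$ and $r\le m\le n$.
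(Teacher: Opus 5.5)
\paragraph{Verdict.} Your treatment of $|F|>2$ is the paper's construction: split $t=k-(m-r)$ of the first $r$ diagonal ones as $(1-c)+c$, and put $-1$ in $Y$ on positions $r+1,\dots,m$. The gap is the case $F=\F_2$, where your devices do not give $Y$ enough rank.

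\paragraph{Counting failure for $m>k$.} Each disjoint $2\times 2$ block $\begin{pmatrix}1&1\\0&1\end{pmatrix}+\begin{pmatrix}0&1\\0&0\end{pmatrix}$ uses two of the first $r$ indices to give $Y$ one unit of rank. So these blocks supply at most $\lfloor r/2\rfloor$ of the required $t=k-m+r$ units. Your single-index splittings need coordinates beyond $m$, and these do not exist when $m=n$. Already for $n=m=3$, $k=2$, $r=3$ you need $t=2$ but can reach only $1$. The paper handles $m>k$ with a chain rather than disjoint pairs. It takes $X_m=\diag(U_r,I_{m-r})$ with $U_r=I_r+\sum_{i=1}^{r-(m-k)}E_{i,i+1}$, so $I_r-U_r$ has rank $r-(m-k)$. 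A chain on $j+1$ indices transfers $j$ units, and $r-(m-k)\le r-1$ holds precisely because $m>k$.

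\paragraph{Structural obstruction for $m=k$.} Here the problem is not just bookkeeping. Take $m=k=n$ and $r\ge 1$; for example, write $I_2$ as a sum of two invertible matrices over $\F_2$. With no spare coordinates, every piece you use is triangular. So after a simultaneous permutation of coordinates, $X$ and $Y=I_n(r)-X$ are both upper triangular. At an index $i$ where $I_n(r)$ has a $1$, we get $x_{ii}+y_{ii}=1$ in $\F_2$. Hence one of $X,Y$ has a zero diagonal entry and is singular, although both must have rank $n$.

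\paragraph{What is missing.} You need a genuinely non-triangular construction, which the paper supplies with cyclic-shift-type matrices:
\begin{itemize}
\item for $r<m$, $X_m=\sum_{i=1}^{m-1}E_{i,i+1}+E_{m,1}$;
\item for $r=m\ge 2$, $X_m=\sum_{i=1}^{m-1}E_{i,i+1}+E_{m,1}+E_{m,2}$, a companion matrix with characteristic polynomial $x^m+x+1$, which has no root in $\F_2$, so both $X_m$ and $I_m-X_m$ are invertible;
\item for $r=m=1$, an explicit $2\times 2$ example.
\end{itemize}
Without the chain and these constructions, the $\F_2$ half of the lemma is unproved. Your closing claim that the fitting ``uses exactly the hypotheses'' does not hold for your devices.
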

\begin{proof}
    First, let us suppose that $F$ has more than 2 elements. Then, we may choose an $h \in F \setminus\{0,1\}$ and write $I_n(r)$ as a sum of 2 matrices
    \begin{align*}
        I_n(r)
        &= \diag(\underbrace{1,\ldots,1}_{m-k},\underbrace{1-h,\ldots,1-h}_{r-(m-k)},\underbrace{1,\ldots,1}_{m-r},\underbrace{0,\ldots,0}_{n-m})\\
        &+ \diag(\underbrace{0,\ldots,0}_{m-k},\underbrace{h,\ldots,h}_{r-(m-k)},\underbrace{-1,\ldots,-1}_{m-r},\underbrace{0,\ldots,0}_{n-m}).
    \end{align*}
    Let $X_m, X_k$ be the two diagonal matrices in the equation above. Then, $X_m$ has $m$ non-zero rows and thus has rank $m$ while $X_k$ has $k$ non-zero rows and thus has rank $k$.
    If $F$ is the field with two elements, we further divide the condition into two cases, $m>k$ or $m=k$.
    \paragraph{Case 1: $m>k$}
    Let $U_r$ be an $r\times r$ upper triangular matrix with ones on the main diagonal. We set exactly $r - (m-k)$ ones on the superdiagonal of $U_r$, and the rest to zeros. Because $m > k$, we are guaranteed that $r - (m-k) \le r-1$, ensuring there are enough positions on the superdiagonal. We can see that  that $U_r$ has rank $r$ and it has the following explicit description 
    \[U_r = I_r +\sum_{i=1}^{r - (m-k)} E_{i,(i+1)}.\]
    We then construct the submatrices $X_m,X_k$ as follows:
    \[
    X_m =
    \begin{bmatrix}
        U_r & 0\\
        0 & I_{m-r}
    \end{bmatrix},
    \]
    \[
    X_k = I_m(r) - X_m =
    \begin{bmatrix}
        I_r-U_r & 0\\
        0 & -I_{m-r}
    \end{bmatrix}.
    \]
    Utilizing the properties of block diagonal matrices,
    \[\rank(X_m) = \rank(U_r) + \rank(I_{m-r}) = r+m-r=m.\]
    The block $I_r - U_r$ is  upper triangular with exactly $r - (m-k)$ non-zero entries on its superdiagonal, giving it a rank of exactly $r - (m-k)$. 
    \[\rank(X_k) = \rank(I_r - U_r) + \rank(-I_{m-r}) = r - (m-k)+m-r=k.\]
    Adding $X_m,X_k$ we get $I_r$ as the only non-zero block of the sum,
    \[
    X_m + X_k =
    \begin{bmatrix}
        U_r + I_r - U_r & 0\\
        0 & I_{m-r} - I_{m-r}
    \end{bmatrix}
    =
    \begin{bmatrix}
        I_r& 0\\
        0 & 0
    \end{bmatrix}.
    \]
    Both $X_m$ and $X_k$ are then padded with zeros to reach the dimension $n\times n$.

    \paragraph{Case 2: $m=k$}
    We consider 3 more subcases, either $r<m$, $r=m \geq 2$, or $r=m=1$.
    \subparagraph{Case 2.1: $r<m$}
    Let $X_m$ be the following $m\times m$ cyclic shift matrix:
    \[X_m = \left[ \sum_{i=1}^{m-1} E_{i,(i+1)} \right] + E_{m,1}.\]
    Evaluating the determinant of $X_m$ by expanding along the first column isolates the single non-zero entry coming from $E_{m,1}$. The corresponding minor is $I_{m-1}$, yielding a non-zero determinant. Thus $X_m$ is invertible and has full rank $m$. Then, let $X_k = I_m(r) - X_m$. Since $r<m$, the ones from $I_m(r)$ do not appear on the $m$-th row of $X_k$. Evaluating the determinant of $X_k$ by expanding along the $m$-th (bottom) row, the only non-zero entry comes from $E_{m,1}$. The corresponding minor is an upper triangular matrix with ones all across its main diagonal, yielding a non-zero determinant. Thus $X_k$ is also invertible and has full rank $m$. $X_m$ and $X_k$ add up to $I_m(r)$.
    \subparagraph{Case 2.2: $r=m \geq 2$}
    We make one small adjustment to $X_m$:
    \[X_m = \left[ \sum_{i=1}^{m-1} E_{i,(i+1)} \right] + E_{m,1} + E_{m,2}.\]
    Similarly, through the calculation of the determinant with the first column, we can show that $X_m$ has full rank $m$. Then, let $X_k = I_m - X_m$. The matrix $X_k$ has ones on its main diagonal, superdiagonal, and at positions $(m,1)$ and $(m,2)$. Because the first row of $X_k$ contains ones exactly at $(1,1)$ and $(1,2)$ with zeros elsewhere, we can apply the elementary row operation of adding the first row to the $m$-th row. Over $\F_2$, this turn the entries at $(m,1)$ and $(m,2)$ to zero, leaving the $m$-th row with only a single one located on the main diagonal at $(m,m)$. This transforms $X_k$ into an upper triangular matrix with ones completely across its main diagonal, thus has full rank $m$. Because elementary row operations preserve the rank, $X_k$ has full rank $m$. $X_m$ and $X_k$ add up to $I_m$, and thus $I_m(r)$ since $r=m$.
    \subparagraph{Case 2.3: $r=m=1$}
    We default to the following rank 1 matrices $X_m,X_k$:
    \[
    X_m = 
    \begin{bmatrix}
        1 & 1\\
        0 & 0
    \end{bmatrix},
    X_k =
    \begin{bmatrix}
        0 & 1\\
        0 & 0
    \end{bmatrix},
    \]
    \[
    X_m+X_k =
    \begin{bmatrix}
        1 & 0\\
        0 & 0
    \end{bmatrix}.
    \]
    Both $X_m$ and $X_k$ are then padded with zeros to reach the dimension $n\times n$.
\end{proof}
Let us demonstrate Lemma~\ref{lem:r<m} with a few examples. More examples could be found in \cite{githubrepo}.
\begin{expl}
    In the ring $M_5(\F_2)$, we can write $I_5(3)$ as a sum of matrices with rank $m=4$ and rank $k=2$. In this case, $m>k$.
    \[
    X_m =
    \begin{bmatrix}
        \mathbf{1} & 1 & 0 & 0 & 0\\
        0 & \mathbf{1} & 0 & 0 & 0\\
        0 & 0 & \mathbf{1} & 0 & 0\\
        0 & 0 & 0 & 1 & 0\\
        0 & 0 & 0 & 0 & 0
    \end{bmatrix},
    X_k =
    \begin{bmatrix}
        0 & 1 & 0 & 0 & 0\\
        0 & 0 & 0 & 0 & 0\\
        0 & 0 & 0 & 0 & 0\\
        0 & 0 & 0 & 1 & 0\\
        0 & 0 & 0 & 0 & 0
    \end{bmatrix}.
    \]
\end{expl}
\begin{expl}
    In the ring $M_3(\F_2)$, we can write $I_3(2)$ as a sum of matrices with rank $m=k=3$.
    \[
    X_m =
    \begin{bmatrix}
        0 & 1 & 0\\
        0 & 0 & 1\\
        1 & 1 & 0
    \end{bmatrix},
    X_k =
    \begin{bmatrix}
        \mathbf{1} & 1 & 0\\
        0 & \mathbf{1} & 1\\
        1 & 1 & 0
    \end{bmatrix}.
    \]
\end{expl}

\begin{lem} \label{lem:r>m}
    If $m < r \leq \min\{m+k, n\}$, any $n \times n$ matrix with rank $r$ is a sum of an $n \times n$ matrix with rank $m$ and an $n \times n$ matrix with rank $k$.
\end{lem}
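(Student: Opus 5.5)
By the reduction explained before Lemma~\ref{lem:r<m}, it suffices to write $I_n(r)$ as $X_m+X_k$ with $\rank(X_m)=m$ and $\rank(X_k)=k$. We keep the standing assumption $m\ge k$. The natural idea is to split the $r$ diagonal ones between the two summands. Put $s=r-m$, so $1\le s\le k$. We give $X_m$ the first $m$ diagonal ones and $X_k$ the remaining $s$ diagonal ones. Then $X_k$ still needs $k-s$ extra units of rank, and these must cancel against something in $X_m$ so that the sum stays equal to $I_n(r)$.

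The concrete construction is as follows. Since $r\le n$, the $s$ leftover ones sit on diagonal positions $m+1,\dots,r$, inside the $n\times n$ matrix. Write $I_n(r)=I_n(m)+D$, where $D=\diag(0,\dots,0,1,\dots,1,0,\dots,0)$ has ones in positions $m+1,\dots,r$. Now we need $k-s$ more rank for $X_k$ without changing the sum. Because $k-s\le m-s\le m$, we can use part of the block $I_n(m)$, just as in Lemma~\ref{lem:r<m}.

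When $|F|>2$, take $h\in F\setminus\{0,1\}$ and set
\[X_m=\diag(\underbrace{1-h,\dots,1-h}_{k-s},\underbrace{1,\dots,1}_{m-(k-s)},0,\dots,0),\]
\[X_k=\diag(\underbrace{h,\dots,h}_{k-s},\underbrace{0,\dots,0}_{m-(k-s)},\underbrace{1,\dots,1}_{s},0,\dots,0).\]
Every diagonal entry that should be nonzero is nonzero, so $\rank X_m=m$ and $\rank X_k=(k-s)+s=k$. Their sum is $I_n(r)$.

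When $F=\F_2$, we cannot split $1=(1-h)+h$ with both parts nonzero. Instead we add off-diagonal entries that cancel in the sum. Let $N=\sum_{i=1}^{k-s}E_{i,i+1}$ be a superdiagonal pattern inside the top-left $m\times m$ block; this requires $k-s\le m-1$. Set $X_m=I_n(m)+N$ and $X_k=D+N$, using $N+N=0$ over $\F_2$. Then $X_m$ is upper unitriangular on its first $m$ coordinates, so $\rank X_m=m$. Also $N$ and $D$ occupy disjoint row and column sets, so $\rank X_k=(k-s)+s=k$, as in Case~1 of Lemma~\ref{lem:r<m}.

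The main obstacle is the edge case $k-s=m$ over $\F_2$, where $N$ would need $m$ superdiagonal entries in an $m\times m$ block. Since $s\ge1$ and $k\le m$, we have $k-s\le m-1$, so this edge case never occurs. What remains is to check carefully that the column supports of $N$ (columns $2,\dots,k-s+1\le m$) and of $D$ (columns $m+1,\dots,r$) are disjoint. Finally, we pad both matrices with zeros up to size $n\times n$ and conjugate back by $P$ and $Q$ to handle an arbitrary matrix of rank $r$.
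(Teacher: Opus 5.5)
Your proposal is correct and follows essentially the same route as the paper. For $|F|>2$ your diagonal splitting is the paper's with the $1-h$ block and the block of ones listed in the other order. Over $\F_2$ your $N=\sum_{i=1}^{k-s}E_{i,i+1}$, with $k-s=m+k-r$, is exactly the paper's superdiagonal matrix $U$, so you get the same $X_m=I_m-U$ and $X_k=U\oplus I_{r-m}$. Your explicit checks that $k-s\le m-1$ and that the row and column supports of $N$ and $D$ are disjoint justify the rank count a little more carefully than the paper does.
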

\begin{proof}
    Suppose $F$ has more than 2 elements. Then, we may choose an $h \in F \setminus\{0,1\}$ and write $I_n(r)$ as a sum of 2 matrices.
    \begin{align*}
        I_n(r)
        &= \diag(\underbrace{1,\ldots,1}_{r-k},\underbrace{1-h,\ldots,1-h}_{m-(r-k)},\underbrace{0,\ldots,0}_{r-m},\underbrace{0,\ldots,0}_{n-r})\\
        &+ \diag(\underbrace{0,\ldots,0}_{r-k},\underbrace{h,\ldots,h}_{m-(r-k)},\underbrace{1,\ldots,1}_{r-m},\underbrace{0,\ldots,0}_{n-r}).
    \end{align*}

    Let $X_m, X_k$ be the two diagonal matrices in the equation above. Then, $X_m$ has $m$ non-zero rows and thus has rank $m$ while $X_k$ has $k$ non-zero rows and thus has rank $k$.

    If $F$ is the field with two elements, let $U$ be an $m \times m$ upper triangular matrix with exactly $m+k-r$ ones placed along its superdiagonal, and zeros elsewhere.
    \[U = \sum_{i=1}^{m+k-r} E_{i, (i+1)}.\]
    We note that $U$ has $m+k-r$ pivot positions and thus has rank $m+k-r$. We then construct the submatrices $X_m,X_k$ as follows:
    \[
    X_m =
    \begin{bmatrix}
        I_m - U & 0\\
        0 & 0_{r-m}
    \end{bmatrix},
    X_k =
    \begin{bmatrix}
        U & 0\\
        0 & I_{r-m}
    \end{bmatrix}.
    \]
    The block $I_m - U$ is upper triangular with exactly $m$ ones on its diagonal, giving it a rank of exactly $m$. 
    Utilizing the properties of block diagonal matrices,
    \[\rank(X_m) = \rank(I_m + U) = m,\]
    \[\rank(X_k) = \rank(U) + \rank(I_{r-m}) = m+k-r + r -m = k.\]
    A direct computation shows that $X_m,X_k$ add up to $I_r$.
    \[
    X_m + X_k =
    \begin{bmatrix}
        I_m - U & 0\\
        0 & 0_{r-m}
    \end{bmatrix}
    +
    \begin{bmatrix}
        U & 0\\
        0 & I_{r-m}
    \end{bmatrix}
    =
    \begin{bmatrix}
        I_m & 0\\
        0 & I_{r-m}
    \end{bmatrix}
    = I_r.
    \]
    Both $X_m$ and $X_k$ are then padded with zeros to reach the dimension $n\times n$.
\end{proof}
Let us demonstrate Lemma~\ref{lem:r>m} with an example.
\begin{expl}
    In the ring $M_4(\F_2)$, we can write $I_4(4)$ as a sum of matrices with rank $m=3$ and rank $k=2$.
    \[
    X_m =
    \begin{bmatrix}
        \mathbf{1} & 1 & 0 & 0\\
        0 & \mathbf{1} & 0 & 0\\
        0 & 0 & \mathbf{1} & 0\\
        0 & 0 & 0 & 0
    \end{bmatrix},
    X_k =
    \begin{bmatrix}
        0 & 1 & 0 & 0\\
        0 & 0 & 0 & 0\\
        0 & 0 & 0 & 0\\
        0 & 0 & 0 & \mathbf{1}
    \end{bmatrix}.
    \]
\end{expl}

\begin{thm} \label{thm:sum_rank_sets}
    An $n \times n$ matrix $A$ with rank $r$ is a sum of two $n \times n$ matrices with rank $k$ and rank $m$ where $m \geq k$ if and only if $m-k \leq r \leq \min\{m+k, n\}$. Consequently 
    \[ V_k + V_m = \bigsqcup_{i=m-k}^{\min\{m+k, n\}} V_i.\]
\end{thm}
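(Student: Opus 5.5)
The plan is to split the statement into sufficiency and necessity. Sufficiency is already in place: if $m-k \leq r \leq m$, Lemma~\ref{lem:r<m} applies, and if $m < r \leq \min\{m+k,n\}$, Lemma~\ref{lem:r>m} applies. Together these cover the whole range $m-k \leq r \leq \min\{m+k,n\}$, using the reduction explained before Lemma~\ref{lem:r<m}: write $A = PI_n(r)Q$ and transport the decomposition of $I_n(r)$ by $P$ and $Q$.

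For necessity, suppose $A = X + Y$ with $\rank(X)=m$ and $\rank(Y)=k$, where $m \geq k$.
\begin{itemize}
\item The upper bound comes from subadditivity of rank. The column space of $A$ lies in $\mathrm{col}(X)+\mathrm{col}(Y)$, so
\[
\rank(A) \leq \dim(\mathrm{col}(X)+\mathrm{col}(Y)) \leq m+k.
\]
Trivially $\rank(A)\leq n$, so $r \leq \min\{m+k,n\}$.
\item For the lower bound, write $X = A + (-Y)$ and apply the same subadditivity. Since $\rank(-Y)=k$, this gives $m \leq r + k$, that is, $r \geq m-k$.
\end{itemize}
Neither step depends on the field, so the $\F_2$ subtleties of the lemmas do not arise here.

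Finally, the set identity follows. The inclusion $V_k+V_m \subseteq \bigsqcup_{i=m-k}^{\min\{m+k,n\}} V_i$ is the necessity direction, and the reverse inclusion is the sufficiency direction. One convention needs care. When $r = m-k = 0$, which happens when $m=k$, the zero matrix is expressible, since $X+(-X)=0$; Lemma~\ref{lem:r<m}'s diagonal construction also handles $r=0$. So the union should be read with $V_0=\{0\}$ when $m=k$.

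The only potential obstacle is verifying that the lemmas are valid at the boundary values $r=m-k$ and $r=\min\{m+k,n\}$, for instance $r=0$, and in the $\F_2$ case when $r-(m-k)=0$. I would check these edge cases by inspecting the explicit constructions. The genuinely mathematical content, namely the rank inequalities, is routine.
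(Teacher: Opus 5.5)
Your proposal is correct and follows the paper's proof. Sufficiency comes from Lemma~\ref{lem:r<m} and Lemma~\ref{lem:r>m}, transported via $A = PI_n(r)Q$, and necessity comes from rank subadditivity applied to $A = X+Y$ and to $X = A+(-Y)$. Your remark that the union must be read with $V_0=\{0\}$ when $m=k$ is a useful clarification that the paper leaves implicit. The boundary cases you flag (e.g.\ $r=0$, or $r=m-k$ over $\F_2$) do go through in the explicit constructions.
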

\begin{proof}
    By Lemma~\ref{lem:r<m} and Lemma~\ref{lem:r>m}, we have established that if
    \[m-k \leq r \leq \min\{m+k, n\},\]
    then there exist matrices $X_m \in V_m$, $X_k \in V_k$ such that
    \[I_n(r) = X_m + X_k.\]
    As explained above, we can find $P, Q \in GL_n(F)$ such that $A=PI_n(r)Q$ and hence 
    \[A = PI_n(r)Q = P (X_m + X_k)Q = PX_mQ + PX_kQ.\]
    By the properties of $P$ and $Q$, $PX_mQ \in V_m$ and $PX_kQ \in V_k$.
    Therefore, any $n \times n$ matrix $A$ with rank $r$ is a sum of two matrices with rank $k$ and rank $m$ if $m-k \leq r \leq \min\{m+k, n\}$.

    Conversely, suppose that $A$ is an $n \times n$ matrix with rank $r$ and $X_m,X_k$ are $n \times n$ matrices with rank $m, k$ respectively such that
    \[A = X_m + X_k.\]
    By the rank subadditivity property, 
    \[r:= \rank(A) = \rank(X_m+X_k) \leq \rank(X_m)+\rank(X_k) = m+k .\]
    Since $A \in M_n(F)$, we conclude that $r \leq \min \{m+k, n\}.$ Similarly, let us consider
    \[X_m = (X_m + X_k) + (-X_k)\]
    \[X_m = A + (-X_k).\]
    By the rank subadditivity property, $m \leq r + k$, and therefore $m-k \leq r.$ In summary, we have 
    \[m-k \leq r \leq \min\{m+k, n\}.\]
\end{proof}
Additionally, numerical experiments validating Theorem~\ref{thm:sum_rank_sets} can be found at \cite{githubrepo}.
\begin{cor} \label{cor:multiply}
    For $m \geq 2$, any matrix with rank $r$ with $ r \leq m  k$ can be written as a sum of $m$ matrices with rank $k$. In other words
    \[\forall m \geq 2, \underbrace{V_k + \cdots + V_k}_{m} = \bigsqcup_{i=0}^{\min\{mk, n\}} V_i.\]
\end{cor}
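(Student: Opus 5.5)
Throughout, $V_0=\{0\}$, so the corollary asserts that the $m$-fold sum $mV_k := V_k+\cdots+V_k$ consists exactly of the matrices of rank at most $\min\{mk,n\}$. The plan is induction on $m\geq 2$, with Theorem~\ref{thm:sum_rank_sets} as the only tool.

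The inclusion $\subseteq$ is immediate. Rank subadditivity gives that a sum of $m$ matrices of rank $k$ has rank at most $mk$, and every matrix in $M_n(F)$ has rank at most $n$.

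For the reverse inclusion, the base case $m=2$ is Theorem~\ref{thm:sum_rank_sets} with $m=k$. It gives
\[ V_k+V_k=\bigsqcup_{i=0}^{\min\{2k,n\}} V_i, \]
where the term $i=0$ says that $0=X+(-X)$ with $X\in V_k$.

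For the inductive step, assume that $(m-1)V_k=\bigsqcup_{i=0}^{\min\{(m-1)k,n\}}V_i$, and let $A$ have rank $r\leq \min\{mk,n\}$. We look for some $j\leq \min\{(m-1)k,n\}$ with $A\in V_j+V_k$; then $A\in (m-1)V_k+V_k=mV_k$. By Theorem~\ref{thm:sum_rank_sets} applied to the pair of ranks $j$ and $k$, it suffices that $|j-k|\leq r\leq \min\{j+k,n\}$. We treat two cases.
\begin{itemize}
\item If $r\leq k$, take $j=k$. This is allowed since $k\leq (m-1)k$ and $k\leq n$, and the condition $0\leq r\leq \min\{2k,n\}$ holds.
\item If $r>k$, take $j=r-k$. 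Then $1\leq j\leq (m-1)k$ because $r\leq mk$, and $j<r\leq n$. The condition becomes $|r-2k|\leq r\leq \min\{r,n\}$, and $|r-2k|\leq r$ holds because $r\geq k$.
\end{itemize}
This closes the induction.

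The main point of care is the parameter choice in the second case. Theorem~\ref{thm:sum_rank_sets} assumes the larger rank is listed first, so when $j<k$ we must apply it with the roles of $j$ and $k$ swapped. The required inequality is the same in either order, namely $|j-k|\leq r\leq\min\{j+k,n\}$, so the argument is unaffected.
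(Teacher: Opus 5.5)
Your proof is correct and follows essentially the same route as the paper: induction on the number of summands, with Theorem~\ref{thm:sum_rank_sets} supplying both the base case and the inductive step. The difference is one of detail. You name the witness $j$ explicitly ($j=k$ if $r\le k$, $j=r-k$ otherwise), which justifies the final identity $\bigcup_{i}(V_i+V_k)=\bigsqcup_{i=0}^{\min\{mk,n\}}V_i$ that the paper asserts without verification, and this choice also avoids the degenerate pair $V_0+V_k$.
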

\begin{proof}
    We proceed by induction on the number of summands. The base case $m=2$ is true by Theorem~\ref{thm:sum_rank_sets}. Assume that
    \[ \underbrace{V_k + \ldots + V_k}_{m-1} = \bigsqcup_{i=0}^{\min\{(m-1)k, n\}} V_i.\]
    Considering the sum of $m$ copies of $V_k$,
    \[\underbrace{V_k + \ldots + V_k}_{m} = \underbrace{V_k + \ldots + V_k}_{m-1} + V_k = \left( \bigsqcup_{i=0}^{\min\{(m-1)k, n\}} V_i \right)+ V_k.\]
    By the properties of our set addition operation in Definition~\ref{def:set_addition},
    \[\left( \bigsqcup_{i=0}^{\min\{(m-1)k, n\}} V_i \right)+ V_k = \bigcup_{i=0}^{\min\{(m-1)k, n\}} ( V_i + V_k ).\]
    Applying Theorem~\ref{thm:sum_rank_sets} to each pair, we get
    \[\bigcup_{i=0}^{\min\{(m-1)k, n\}} ( V_i + V_k ) = \bigcup_{i=0}^{\min\{(m-1)k, n\}} \left( \bigcup_{j=|i-k|}^{\min\{i+k, n\}} V_j \right) = \bigcup_{i=0}^{\min\{mk, n\}} V_i.\]
    Therefore,
    \[\forall m \geq 2, \underbrace{V_k + \ldots + V_k}_{m} = \bigsqcup_{i=0}^{\min\{mk, n\}} V_i.\]
\end{proof}

We now describe the distance between any two matrices in the gcd-graph $\Gamma(M_n(F), S).$ We recall that  $S = \bigsqcup_{i=1}^r V_{k_i}$. Let $k_{max}:=k_r$ be the highest rank among matrices in $S$.
\begin{thm} \label{thm:distance}
    The distance between any two non-adjacent matrices $A,B$, is
    \[d(A,B) = \max \left\{2, \left\lceil \frac{\rank(B-A)}{k_{max}} \right\rceil \right\}.\]
\end{thm}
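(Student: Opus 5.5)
By translation invariance (Proposition~\ref{prop:automorphism}) and the walk description of distance in Cayley graphs, $d(A,B)$ is the least $t$ such that $C:=B-A$ is a sum of $t$ elements of $S$. Set $\rho=\rank(C)$ and $K=k_{max}$. Since $A,B$ are non-adjacent and distinct, $C\notin S$ and $C\neq 0$. Hence $d(A,B)\ge 2$, and it remains to show
\[
d(A,B)=\max\left\{2,\left\lceil \rho/K\right\rceil\right\}.
\]

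For the lower bound, suppose $C=s_1+\cdots+s_t$ with $s_i\in S$. Rank subadditivity gives $\rho\le \sum_i \rank(s_i)\le tK$, so $t\ge \lceil \rho/K\rceil$. Together with $t\ge 2$, this gives $d(A,B)\ge \max\{2,\lceil \rho/K\rceil\}$.

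For the upper bound, let $t=\max\{2,\lceil \rho/K\rceil\}$. Then $t\ge 2$ and $\rho\le \min\{tK,n\}$. By Corollary~\ref{cor:multiply}, the $t$-fold sum $V_K+\cdots+V_K$ equals $\bigsqcup_{i=0}^{\min\{tK,n\}}V_i$, so $C$ lies in it. Thus $C$ is a sum of $t$ elements of $V_K\subseteq S$, and $d(A,B)\le t$.

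The one delicate point is Corollary~\ref{cor:multiply} itself, which I would recheck since the upper bound rests on it. In the inductive step, the union $\bigcup_i (V_i+V_K)$ must cover every rank from $0$ to $\min\{tK,n\}$. Theorem~\ref{thm:sum_rank_sets} was stated with $m\ge k$ and ranks $\ge 1$, but the cases needed here go beyond that:
\begin{itemize}
\item For $i<K$, use the symmetric version $|i-K|\le j\le \min\{i+K,n\}$.
\item For $i=0$, use $V_0+V_K=V_K$ directly.
\item Rank $0$ is obtained from $V_K+V_K$, since $X+(-X)=0$.
\item Small ranks $j<K$ are obtained from $i=K$, which gives the range $0\le j\le \min\{2K,n\}$.
\end{itemize}
Once these ranges are confirmed to overlap contiguously, which is straightforward because consecutive values of $i$ shift the interval by one, the induction goes through and the formula follows.
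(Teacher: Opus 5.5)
Your proof is correct and is essentially the paper's argument: rank subadditivity gives the lower bound, and Corollary~\ref{cor:multiply} applied to sums of rank-$k_{max}$ matrices gives the matching upper bound. The differences are cosmetic. You merge the paper's two cases ($\rank(B-A)\le 2k_{max}$ via Theorem~\ref{thm:sum_rank_sets}, and $\rank(B-A)>2k_{max}$ via Corollary~\ref{cor:multiply}) into one by taking $t=\max\{2,\lceil \rho/K\rceil\}$. You also make explicit the assumption $A\neq B$ and the boundary ranks in the induction behind Corollary~\ref{cor:multiply}, which the paper leaves implicit (handling the ranks through the $|i-k|$ lower limit).
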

\begin{proof}
    Let $A,B$ be two non-adjacent matrices. In other words, $B-A \not \in S$. A path of length $d$ between matrices $A$ and $B$ implies that $B - A$ can be written as a sum of $d$ elements, $M_1, M_2, \ldots, M_d \in S$.
    \[\rank(B-A) = \rank \left(\sum_{i=1}^{d} M_i\right).\]
    By the subadditivity of the rank, we have 
    \[ \rank(B-A) \leq \sum_{i=1}^d \rank(M_i) \leq d k_{max}. \] 
     This shows that $d \geq \rho$ where 
    \[\rho = \frac{\rank(B-A)}{k_{max}}.\]
    We divide the problem into two cases. Either $\rho \leq 2$ or $\rho>2$. If $\rho \leq 2$,
    \[\rank(B-A) \leq 2 \cdot k_{max}.\]
    By Theorem~\ref{thm:sum_rank_sets}, $B-A$ can be written as a sum of two matrices with rank $k_{max}$. Thus,
    \[d(A,B) = 2.\]
    If $\rho>2$,
    \[\rank(B-A) \leq \lceil \rho \rceil \cdot k_{max}.\]
    By Corollary~\ref{cor:multiply}, $B-A$ can be written as a sum of $\lceil \rho \rceil$ matrices with rank $k_{max}$. Thus,
    \[d(A,B) = \lceil \rho \rceil.\]
    Therefore, for any two non-adjacent matrices $A,B$,
    \[d(A,B) = \max \left\{2, \left\lceil \frac{\rank(B-A)}{k_{max}} \right\rceil \right\}.\]
\end{proof}

Combining Theorem \ref{cor:multiply} and Theorem \ref{thm:distance}, we have the following.
\begin{thm} \label{cor:diameter}
Let \(G:=\Gamma(M_n(F), [k_1, k_2, \ldots, k_r])\) be a gcd-graph with \(1 \leq r \leq n-1\). Then $G$ is connected. Furthermore, the diameter is attained by a pair of matrices \(A,B\) for which \(\rank(B-A)\) is the largest positive integer \(k \leq n\) such that \(k \notin [k_1, k_2, \ldots, k_r]\). In particular,
\[
\diam(\Gamma) = \max \left\{2, \left\lceil \frac{n}{k_{\max}} \right\rceil \right\}.
\]
\end{thm}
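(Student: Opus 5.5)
Connectivity comes first. Corollary~\ref{cor:multiply} with $k=k_{\max}$ and $m=n$ (or any $m\geq \max\{2,\lceil n/k_{\max}\rceil\}$) gives
\[\underbrace{V_{k_{\max}}+\cdots+V_{k_{\max}}}_{m}=\bigsqcup_{i=0}^{n}V_i=M_n(F).\]
So every matrix is a sum of elements of $S$. By the discussion in \cref{sec:background}, every pair of vertices is then joined by a walk, and $G$ is connected.

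For the diameter, I would use the translation automorphisms of Proposition~\ref{prop:automorphism}. These reduce the problem to $d(0,C)$ with $C=B-A$, which depends only on $\rank(C)$. If $C\in S$, the distance is $1$. Otherwise Theorem~\ref{thm:distance} gives
\[d(0,C)=\max\{2,\lceil \rank(C)/k_{\max}\rceil\}.\]
This expression is nondecreasing in $\rank(C)$. So among non-adjacent pairs the maximum is attained when $\rank(C)=k^*$, where $k^*$ is the largest integer $\le n$ not in $\{k_1,\dots,k_r\}$. Such $k^*\geq 1$ exists because $r\le n-1$, so $S$ does not contain all of $V_1,\dots,V_n$. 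Adjacent pairs contribute only $1\le 2$, so they never increase the maximum. Hence
\[\diam(G)=\max\{2,\lceil k^*/k_{\max}\rceil\}.\]

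The main step, and the one requiring care, is to replace $k^*$ by $n$. There are two cases.
\begin{enumerate}
\item If $n\notin\{k_1,\dots,k_r\}$, then $k^*=n$ and we are done.
\item If $n\in\{k_1,\dots,k_r\}$, then $k_{\max}=n$, so $\lceil n/k_{\max}\rceil=1$ and the claimed value is $2$. In this case $k^*\le n-1<k_{\max}$, so $\lceil k^*/k_{\max}\rceil=1$ and the formula again gives $2$.
\end{enumerate}
In both cases the diameter equals $\max\{2,\lceil n/k_{\max}\rceil\}$.

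I would also check that the distance-$2$ value is really attained, i.e.\ that a non-adjacent pair exists. This is exactly the existence of $k^*\geq 1$ established above, so the bound is achieved by $A=0$ and any $B\in V_{k^*}$.
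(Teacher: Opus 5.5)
Your proposal is correct and follows the same route as the paper, which obtains this theorem by combining Corollary~\ref{cor:multiply} (for connectivity) with Theorem~\ref{thm:distance} (for the distance formula, maximized over ranks). Two details the paper leaves implicit are supplied by your argument: the case split on whether $n \in \{k_1,\dots,k_r\}$, where you note that then $k^* \le n-1 < k_{\max} = n$ so both $\lceil k^*/k_{\max}\rceil$ and $\lceil n/k_{\max}\rceil$ equal $1$, and the check that a non-adjacent pair exists because $r \le n-1$.
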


\subsection{Bipartite and Hamiltonian gcd-graphs} \label{subsec:bipartite}
In this section, we study some further graph-theoretic properties of gcd-graphs over $M_n(F)$. First, we show that these graphs are never bipartite (to avoid trivial cases, we restrict ourselves to non-empty and non-complete gcd-graphs).
\begin{thm} \label{thm:bipartite}
    A gcd-graph $\Gamma(M_n(F), S)$ is not bipartite.
\end{thm}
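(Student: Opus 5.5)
The plan is to exhibit an odd cycle, namely a triangle. A graph containing a $3$-cycle cannot be bipartite. We assume, as the paper does, that $S$ is nonempty and that the graph is neither empty nor complete. By Corollary~\ref{cor:generating_set}, $S=\bigsqcup_{i} V_{k_i}$ with at least one $k:=k_i\geq 1$, so $V_k\subseteq S$.

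The key step is to find two matrices $X,Y\in V_k$ with $X+Y\in V_k$. Then the vertices $0$, $X$ and $X+Y$ form a triangle: $X-0=X\in V_k\subseteq S$, $(X+Y)-X=Y\in S$, and $(X+Y)-0\in V_k\subseteq S$. Since $S=-S$, adjacency is symmetric, so these three edges give a closed walk $0\to X\to X+Y\to 0$ of length $3$. The three vertices are distinct because $X\neq 0$, $Y\neq 0$ and $X+Y\neq 0$, so this is a $3$-cycle.

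To produce $X$ and $Y$, I would invoke Theorem~\ref{thm:sum_rank_sets} with $m=k$. Its condition $m-k\leq r\leq \min\{m+k,n\}$ becomes $0\leq r\leq \min\{2k,n\}$. Taking $r=k$ is allowed, since $k\leq n$ and $k\leq 2k$. So $V_k\subseteq V_k+V_k$, and in particular $I_n(k)=X+Y$ for some $X,Y\in V_k$. This is exactly the required triple. Over $\F_2$, the case $r=m=k$ is covered by Case 2.2 and Case 2.3 of Lemma~\ref{lem:r<m}, so no separate argument is needed.

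I expect no real obstacle here. The only point to state carefully is that the triangle's vertices are distinct and that each difference lies in $S$ in the correct direction; both follow because all three differences are nonzero rank-$k$ matrices and $V_k=-V_k$. A Cayley graph with $S$ empty has no edges and would be bipartite, so the standing assumption that the graph is non-empty is exactly what is used.
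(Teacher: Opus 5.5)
Your proof is correct and is essentially the paper's argument. The paper also writes a rank-$k$ matrix $A$ as $B+C$ with $B,C\in V_k$ (citing Corollary~\ref{cor:multiply}, whose two-summand case is Theorem~\ref{thm:sum_rank_sets}) and uses the triangle $0,B,A$, which is your triangle $0,X,X+Y$ after relabeling; your explicit checks of vertex distinctness and of the $\F_2$ subcases are care the paper leaves implicit.
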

\begin{proof}
    We have established in Corollary ~\ref{cor:generating_set} that $S$ must be a non-empty disjoint union of some $V_k$'s. Let $A \in V_k$ be a matrix of rank $k$. By Corollary~\ref{cor:multiply}, $A$ can be written as a sum of 2 matrices $B,C \in V_k$; namely $A = B + C$ and hence  $C = A-B.$ Since $C \in V_k$, $A$ and $B$ are adjacent. Consider
    \[\rank(A-0_n) = \rank(A) = k.\]
    Thus, $A$ and $0_n$ are adjacent. Similarly, $B$ and $0_n$ are also adjacent. This establishes an odd cycle with 3 vertices $A,B,0_n$. Therefore, $\Gamma(M_n(F), S)$ is not bipartite.
\end{proof}

\begin{thm}
    A gcd-graph $\Gamma(M_n(\F_q), S)$ is Hamiltonian.
\end{thm}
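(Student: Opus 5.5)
The plan is to reduce the statement to a classical fact about Cayley graphs on abelian groups. We assume, as in Theorem~\ref{thm:bipartite}, that $S$ is non-empty. The underlying additive group $(M_n(\F_q),+)$ is an elementary abelian $p$-group of order $q^{n^2}$, where $p=\operatorname{char}\F_q$. So it is isomorphic to $\F_p^N$ with $N=n^2\log_p q$. Since $n>1$, there are at least $16$ vertices, so a Hamiltonian cycle makes sense.

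\textbf{Step 1: connectivity.} By Corollary~\ref{cor:generating_set}, $S\supseteq V_k$ for some $k\ge 1$. By Corollary~\ref{cor:multiply}, a sum of $\lceil n/k\rceil$ elements of $V_k$ realises every matrix (this is also the content of Theorem~\ref{cor:diameter}). Hence $S$ generates $M_n(\F_q)$ additively, and in particular spans it as an $\F_p$-vector space.

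\textbf{Step 2: a spanning subgraph.} Because $S$ spans $M_n(\F_q)\cong\F_p^N$ over $\F_p$, we can extract an $\F_p$-basis $s_1,\dots,s_N$ with each $s_i\in S$. Since $S=-S$, the Cayley graph $\Gamma(M_n(\F_q),\{\pm s_1,\dots,\pm s_N\})$ is a spanning subgraph of $\Gamma(M_n(\F_q),S)$. Under the isomorphism sending $s_i$ to the $i$-th standard basis vector, this subgraph is the Cartesian product $C_p\,\square\,C_p\,\square\cdots\square\, C_p$ of $N$ cycles. When $p=2$, the factor $C_p$ degenerates to $K_2$ and the product is the hypercube $Q_N$. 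It therefore suffices to show that this product is Hamiltonian.

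\textbf{Step 3: Hamiltonicity of the product.} For $p=2$, with $N\ge 2$, the reflected Gray code gives a Hamiltonian cycle of $Q_N$. For odd $p$, we argue by induction that $G_j:=C_p^{\square j}$ has a Hamiltonian path, and then close it into a cycle. Given a Hamiltonian path $v_1,\dots,v_M$ in $G_{j}$, we traverse $G_{j}\square P_p$ in a ``snake'' order. We walk along the path in layer $0$, step to layer $1$, walk back along the path, and so on. This yields a Hamiltonian path of $G_{j+1}$. To get a cycle, we use the standard construction for the grid: a $p\times M$ grid graph with $M$ even has a Hamiltonian cycle. Since $M=p^{j}$ is odd for odd $p$, we instead use the wrap-around edges of $C_p$ in the last coordinate to close the snake. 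Concretely, the torus $C_p\square C_M$ is Hamiltonian for all $p,M\ge 3$: traverse the columns in snake order and return via the cyclic edges of one factor. Alternatively, we can simply cite the theorem of Chen and Quimpo (or Marušič) that every connected Cayley graph on a finite abelian group with at least three vertices is Hamiltonian. That citation would make Steps 2--3 unnecessary, but the explicit construction keeps the proof self-contained.

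The main obstacle is the parity bookkeeping in closing the Hamiltonian path into a cycle when $p$ is odd, where grid parity arguments fail. I would handle it by the torus construction above, or else fall back on the Chen--Quimpo theorem.
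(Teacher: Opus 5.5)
Your proof is correct, but your main line differs from the paper's. The paper's proof is two lines. The additive group $(M_n(\F_q),+)$ is abelian of order $q^{n^2}\ge 16$, the graph is connected by Theorem~\ref{cor:diameter}, and Maru\v{s}i\'c's result (every connected Cayley graph on an abelian group of order at least three is Hamiltonian) finishes it. That is exactly the citation you offer as a fallback at the end of Step 3. Your primary argument instead uses that the additive group is elementary abelian. Extracting an $\F_p$-basis $s_1,\dots,s_N$ from $S$ gives a spanning subgraph isomorphic to $Q_N$ or $C_p^{\square N}$, and you then build the cycle by hand. This makes the proof self-contained and gives a slightly sharper conclusion: an explicit Hamiltonian cycle using only the $2N$ generators $\pm s_i$. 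The paper's route is shorter and does not depend on the elementary abelian structure, but it relies on a nontrivial external theorem.

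Two details need tightening.

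\begin{enumerate}
\item In Step 1, if $k=n$ then $\lceil n/k\rceil=1$, and a single element of $V_n$ does not realise every matrix. Corollary~\ref{cor:multiply} requires at least two summands, so take $\max\{2,\lceil n/k\rceil\}$ summands.
\item In Step 3, the closing move as you phrase it (snake through all $p$ layers of every column) fails when $M$ is odd. It ends at $(v_M,p-1)$, which is not adjacent to the start $(v_1,0)$, even in the torus. The standard fix is to reserve layer $0$ for the return trip:
\begin{itemize}
\item for $i=1,\dots,M$, traverse $(v_i,1),\dots,(v_i,p-1)$, alternating direction from column to column;
\item step to $(v_M,0)$, via the wrap edge of $C_p$ if $M$ is odd or an ordinary edge if $M$ is even;
\item walk back $(v_M,0),(v_{M-1},0),\dots,(v_1,0)$ along the path;
\item close with the edge to $(v_1,1)$.
\end{itemize}
This shows $P_M\square C_p$ is Hamiltonian for $p\ge 3$, which is all you need. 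You then only ever need a Hamiltonian path, not a cycle, in $G_j$.
\end{enumerate}
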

\begin{proof}
By definition, the underlying vertex set of the gcd-graph $\Gamma(M_n(\F_q), S)$ is  defined over the abelian group $(M_n(\F_q),+)$. More precisely, the vertices of $\Gamma(M_n(\F_q),S)$ are connected based on matrix addition and subtraction. Because matrix addition is commutative, the underlying additive group of $M_n(\F_q)$ is a finite abelian group. Furthermore, since $n > 1$ and $q \ge 2$, the number of distinct matrices in the group is
    \[|M_n(\F_q)| = q^{n^2} \geq 16.\]
    By \cite[Corollary 3.2]{MARUSIC198349}, we know that every connected Cayley graph of an abelian group of order at least three is Hamiltonian. By  Theorem \ref{cor:diameter}, $\Gamma(M_n(\F_q), S)$ is connected with more than $3$ vertices.  Therefore, $\Gamma(M_n(\F_q), S)$ is Hamiltonian.
\end{proof}

\subsection{Clique numbers} \label{subsec:rank1}
In this section, we study the clique number of a gcd-graph. In the special cases where the gcd-graph is $\Gamma(M_n(\F_q), [n])$, the clique number is calculated explicitly in \cite{kiani2012unitary}. In this section, we study the clique number of the gcd-graph $\Gamma(M_n(\F_q), [1]).$ We begin with a lemma about rank $1$ matrices.

\begin{lem} \label{lem:space}
    If $S \subseteq V_1$ is a set of rank 1 matrices such that
    \[\forall A \neq B \in S, \rank(A - B) = 1,\]
    then all matrices in $S$ share either the same column space or the same row space.
\end{lem}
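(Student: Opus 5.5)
Every rank $1$ matrix can be written as $A = u v^{T}$ with $u, v \in F^n$ nonzero. Here $u$ spans the column space of $A$ and $v$ spans its row space, and both lines are determined by $A$. The plan is to first analyze a single pair, and then show that the pairwise condition forces a global choice between ``shared column space'' and ``shared row space''.

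\textbf{Step 1 (pairs).} Let $A = u v^{T}$ and $B = x y^{T}$ be distinct rank $1$ matrices. We claim that if $\operatorname{col}(A) \neq \operatorname{col}(B)$ and $\operatorname{row}(A) \neq \operatorname{row}(B)$, then $\rank(A-B)=2$. Indeed, in that case $u,x$ are linearly independent and $v,y$ are linearly independent. Then $A - B = [u \; x]\,[v \; -y]^{T}$ is a product of an $n\times 2$ matrix of rank $2$ and a $2\times n$ matrix of rank $2$, so it has rank $2$. Hence the hypothesis of Lemma~\ref{lem:space} implies that any two distinct $A,B\in S$ share their column space or share their row space.

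\textbf{Step 2 (globalize).} If $|S|\le 2$, Step 1 already finishes the proof. Otherwise, suppose not all members share a column space and not all share a row space. We look for three elements $A,B,C\in S$ with $A,B$ sharing a column space but not a row space, and $A,C$ sharing a row space but not a column space. Such a triple exists: since the columns are not all equal, there is a pair with different column spaces, which by Step 1 must share a row space. Similarly there is a pair sharing a column space with different row spaces. A short case check, using the fact that ``same column space'' and ``same row space'' are equivalence relations, produces a common element $A$ for the two pairs. Now compare $B$ and $C$. Their column spaces differ, since $\operatorname{col}(B)=\operatorname{col}(A)\neq \operatorname{col}(C)$. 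Their row spaces differ, since $\operatorname{row}(C)=\operatorname{row}(A)\neq\operatorname{row}(B)$. Step 1 then gives $\rank(B-C)=2$, a contradiction.

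\textbf{Main obstacle.} The delicate point is the combinatorial extraction of a triple with a common element $A$ in Step 2. We handle it as follows. Fix any $A\in S$. If every other element shares $A$'s column space, we are done; likewise if every other element shares $A$'s row space. Otherwise there are $B$ and $C$ with $B$ not sharing $A$'s row space and $C$ not sharing $A$'s column space. By Step 1, $B$ then shares $A$'s column space and $C$ shares $A$'s row space. Moreover $B\neq C$, because $B$ shares $A$'s column space and $C$ does not. This is exactly the required triple, so the argument above yields the contradiction. Note also that sharing both the column space and the row space is allowed throughout (for example $B=\lambda A$); this case is harmless in every step.
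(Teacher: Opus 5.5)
Your proof is correct and has the same structure as the paper's. Both first show that two rank-one matrices with rank-one difference must share a column space or a row space, and then derive a contradiction from a triple $A,B,C$ in which $B-C$ would have rank $2$. Your pairwise step uses the factorization $A-B=[u\;x]\,[v\;-y]^{T}$ into rank-$2$ factors where the paper uses a test vector $w$ with $y_A^{T}w\neq 0=y_B^{T}w$. Your device of fixing $A$ and extracting $B$ and $C$ makes rigorous what the paper leaves as ``by induction''.
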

\begin{proof}
    Note that any rank 1 $n \times n$ matrix $A$ can be decomposed into
    \[A = uv^T,\]
    with $u$ and $v$ being vectors of dimension $n$. Let $A,B \in S$ be two arbitrary matrices such that $A= x_A y_A^T$ and $B = x_B y_B^T$. Then, $A - B$ has rank 1.
    \[A - B = x_A y_A^T - x_B y_B^T = uv^T.\]
    Assume by way of contradiction that $x_{A},x_{B}$ are linearly independent and $y_{A},y_{B}$ are linearly independent. Then, there exists a vector $w\in F^n$ such that $y_{A}^T w \neq 0$ and $y_{B}^T w = 0$. We then have 
    \[x_{A} \left( y_{A}^T w \right) - x_{B} \left( y_{B}^T w \right) = u \left( v^T w \right)\]
    \[ \implies x_{A} \left( y_{A}^T w \right) = u \left( v^T w \right)\]
    \[ \implies x_{A} = u \left( v^T w \right)\left( y_{A}^T w \right)^{-1}\]
    \[ \implies x_{A} \in \spn(u).\]
    Similarly, by an identical argument, we have     $x_{B} \in \spn(u).$  This is a contradiction because $x_{A},x_{B} \in \spn(u)$ despite $x_{A},x_{B}$ being linearly independent. Thus, $x_{A},x_{B}$ and $y_{A},y_{B}$ cannot both be linearly independent . This argument can be expanded to show that all matrices in $S$ either share the same column space or the same row space. Again, assume by way of contradiction that there exists $A,B,C \in S$ such that $A,B$ share the same column space and $A,C$ share the same row space. Suppose
    \[A=uv^T,B=uy^T,C=xv^T,\]
    where $y,v$ are linearly independent and $x,u$ are linearly independent. Since $B,C \in S$,
    \[\rank(B-C) = \rank(uy^T - xv^T) =1.\]
    This only happens if $y \in \spn(v)$ or $x \in \spn(u)$, leading to a contradiction. Thus, it is impossible to have a triplet of rank 1 matrices where the differences are all rank 1 without all three matrices aligning on the same column space or the same row space. By induction, this property holds for the entire set $S$.

\end{proof}

\begin{thm} \label{thm:clique}
    The clique number of $\Gamma(M_n(F), [1])$ is $|F|^n.$
\end{thm}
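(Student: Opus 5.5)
We prove matching lower and upper bounds; by the translation automorphisms of Proposition~\ref{prop:automorphism} we may assume any clique under consideration contains the zero matrix.

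For the lower bound we exhibit an explicit clique of size $|F|^n$. Fix a nonzero vector $u \in F^n$ and set
\[ C = \{ u v^T \mid v \in F^n \}. \]
This set has exactly $|F|^n$ elements, since $v \mapsto uv^T$ is injective when $u \neq 0$. For distinct $v, w$ we have $uv^T - uw^T = u(v-w)^T$ with $v - w \neq 0$, which is a rank $1$ matrix. Hence any two distinct elements of $C$ are adjacent in $\Gamma(M_n(F),[1])$, and $C$ is a clique.

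For the upper bound, let $K$ be a clique containing $0$ and put $S = K \setminus \{0\}$. Every element of $S$ is adjacent to $0$, so $S \subseteq V_1$. Any two distinct elements of $S$ are adjacent, so their difference also has rank $1$. Lemma~\ref{lem:space} therefore applies to $S$: all matrices in $S$ share a common column space $\spn(u)$, or all share a common row space $\spn(v^T)$. In the first case every element of $S$ has the form $u w^T$ with $w \neq 0$, so $S$ injects into $F^n \setminus \{0\}$ via $uw^T \mapsto w$. This gives $|S| \le |F|^n - 1$ and hence $|K| \le |F|^n$. The row-space case is symmetric, or follows by transposition.

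The only delicate point is Lemma~\ref{lem:space} itself, which we take as given; note that its hypothesis applies to $K \setminus \{0\}$ only after the translation reduction. We should also check the case $|S| \le 1$, where the lemma is vacuous but the bound $|K| \le 2 \le |F|^n$ holds trivially. Combining the two bounds gives $\omega(\Gamma(M_n(F),[1])) = |F|^n$.
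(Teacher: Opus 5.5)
Your proof is correct and follows the paper's route: translate a clique so it contains $0_n$, apply Lemma~\ref{lem:space} to its nonzero elements to force a common column space or row space, and count. You are in fact more explicit than the paper. The paper leaves implicit both the lower-bound clique $\{ u v^T \mid v \in F^n \}$ and the injection $uw^T \mapsto w$ that gives the upper bound, and you supply both.
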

\begin{proof}
    Let $\mathcal{M}$ be a maximal clique in $\Gamma(M_n(F), [1])$. 
Let $A_0$ be an arbitrary member matrix of $\mathcal{M}$. Then, by Proposition ~\ref{prop:automorphism}, the translation map $t_{-A_0}$ is an automorphism of $\Gamma(M_n(F),[1]).$ Then, $\mathcal{M'} = t_{-A_0}(\mathcal{M})$ is also a maximal clique in $\Gamma(M_n(F), [1])$. Furthermore, we know that $0_n \in \mathcal{M'}$ and 
    \[\forall A' \in \mathcal{M'} \setminus \{0_n\}, \rank(A')=\rank(A'-0_n)=1.\]
    We have shown that for all non-zero matrices $A',B' \in \mathcal{M'}$, $A',B',$ and $A'-B'$ both have rank 1. Lemma~\ref{lem:space} establishes that this is only possible when all matrices in $\mathcal{M'}$ share the same column space or the same row space. Note that $0_n$ is a special case of the decomposition $A=uv^T$ with either $u$ or $v$ being the null vector. Then, either
    \[\exists u | \forall A' \in \mathcal{M'}, A' = u y_{A'}^T,\]
    or
    \[\exists v | \forall A' \in \mathcal{M'}, A' = x_{A'} v^T.\]
 We conclude that  the size of the maximal clique is the number of distinct $n$ dimensional vectors with coefficients in the field $F$.
    \[\omega(\Gamma(M_n(F), [1])) = |\mathcal{M'}| = |F|^n.\]
\end{proof}

\subsection{Sums of elements in $V_k$} \label{subsec:sum}
In this section, we study the sum of elements in $V_k$. We have the following result. 
\begin{thm} \label{thm:sum}
    The sum of all matrices in $M_n(F)$ with fixed rank $k$ is the zero matrix.
    \[\sum_{A \in V_k} A = 0_n.\]
\end{thm}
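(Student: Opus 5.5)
The idea is to exploit the invariance of $V_k$ under the left action of $GL_n(F)$, i.e.\ the left dilations of Proposition~\ref{prop:automorphism}. Let $\Sigma := \sum_{A \in V_k} A$. Since multiplication by $P \in GL_n(F)$ preserves rank, the map $A \mapsto PA$ is a bijection of $V_k$ onto itself. Hence
\[
P\Sigma = \sum_{A \in V_k} PA = \sum_{A \in V_k} A = \Sigma \qquad \text{for all } P \in GL_n(F).
\]
In the same way, right multiplication gives $\Sigma Q = \Sigma$ for all $Q \in GL_n(F)$. So it suffices to show that the only matrix $\Sigma$ with $P\Sigma = \Sigma$ for every invertible $P$ is the zero matrix.

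To do this, I will apply the identity to each column $c$ of $\Sigma$. It gives $Pc = c$ for all $P \in GL_n(F)$, so it is enough to find a single $P$ fixing no nonzero vector, that is, with $1$ not an eigenvalue. The choice depends on the field.

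\textbf{Case $|F| > 2$.} Take $P = hI_n$ with $h \in F \setminus \{0,1\}$, as in Lemma~\ref{lem:r<m}. Then $hc = c$ forces $c = 0$.

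\textbf{Case $F = \F_2$.} Scalars are useless here, so I will use a matrix without eigenvalue $1$ instead.
\begin{itemize}
\item If $n$ is even, take a block-diagonal $P$ with blocks $\begin{bmatrix} 0 & 1\\ 1 & 1\end{bmatrix}$. Each block has characteristic polynomial $x^2+x+1$, which has no root in $\F_2$.
\item If $n$ is odd, no single $P$ of this kind exists, since $1$ is the only possible eigenvalue over $\F_2$ and some factor of the characteristic polynomial must be handled. It is easier to avoid this altogether: from $P\Sigma = \Sigma$ for all $P$, taking $P = I + E_{i,j}$ with $i \neq j$ gives $E_{i,j}\Sigma = 0$. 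This says row $j$ of $\Sigma$ is zero, for every $j$, so $\Sigma = 0$.
\end{itemize}

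In fact the transvection argument works over every field and handles every $n \geq 2$ in one step, so it is the version I would present. Since $n > 1$, indices $i \neq j$ exist. For each $j$, pick any $i \neq j$ and set $P = I + E_{i,j}$, which is invertible. From $(I+E_{i,j})\Sigma = \Sigma$ we get $E_{i,j}\Sigma = 0$. The $i$-th row of $E_{i,j}\Sigma$ is the $j$-th row of $\Sigma$, so that row vanishes. Doing this for every $j$ gives $\Sigma = 0_n$.

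The only point needing care is the hypothesis $n > 1$. For $n = 1$ and $F = \F_2$, the sum over $V_1$ is $1 \neq 0$, so the statement genuinely depends on having $i \neq j$ available. I do not expect any real obstacle beyond this.
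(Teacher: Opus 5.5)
Your transvection argument is correct and is essentially the paper's second proof: left multiplication by elementary row-operation matrices fixes $\Sigma$, so $\Sigma = 0_n$, and you make explicit the step the paper only asserts via $(I+E_{i,j})\Sigma = \Sigma \Rightarrow E_{i,j}\Sigma = 0$; your scalar case $hI_n$ likewise matches the paper's first argument for $|F|>2$. One aside in your proposal is false, though your final proof does not use it: odd $n$ over $\F_2$ poses no obstruction, since the companion matrix of any irreducible polynomial of degree $n \geq 2$ (for instance $x^3+x+1$ when $n=3$) is invertible and has no eigenvalue $1$ in $\F_2$.
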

\begin{proof}

We provide two arguments for this theorem. One works when $|F|>2$ and the other works for all cases. First, suppose that $|F|>2$. Then, there exists a $\lambda$ such that $\lambda \in F \setminus \{0,1\}.$
    Since scalar multiplication does not change the rank of a matrix, we have $\lambda A \in V_k$ for all  $A \in V_k$. Therefore
    \[\sum_{A \in V_k} A = \sum_{A \in V_k} (\lambda A) = \lambda \left( \sum_{A \in V_k} A \right).\]
Therefore 
    \[(1- \lambda) \left( \sum_{A \in V_k} A \right) = 0_n.\]
    Since $\lambda \neq 1$, $1-\lambda \neq 0$. It follows that $\sum_{A \in V_k} A = 0_n.$

The second proof works for all $F.$ In fact, let $P \in GL_n(F)$ be an elementary matrix representing an arbitrary row operation. Multiplication with $P$ on the left does not change the rank of the matrix. In other words for each  $A \in V_k$ we also have  $PA \in V_k.$  Let $M$ be the sum of all matrices in $M_n(F)$ with fixed rank $k$. We then have
    \[M =\sum_{A \in V_k} A = \sum_{A \in V_k} (PA) = P \left( \sum_{A \in V_k} A \right) = PM.\]
    In other words, applying any row operation on $M$ does not change $M$. The only matrix with such property is $0_n$. This completes the proof.
\end{proof}

\begin{rem}
Let \( G \) be a gcd-graph over \( M_n(\mathbb{F}_q) \). An intersting corollary of \cref{thm:sum} is that the walk obtained by successively applying each element of the generating set exactly once, in any order, returns to the starting vertex.
\end{rem}
\section*{Acknowledgements}
We thank Professor Nguyen Duy Tan for his insightful comments and valuable suggestions, which significantly improved both the clarity and overall quality of this work. Parts of this paper were presented at the 2026 Annual Meeting of the Illinois Section of the Mathematical Association of America (MAA), held in Springfield, Illinois. The first-named author gratefully acknowledges the organizers for providing a travel grant that supported the presentation of this work at the meeting.


\begin{thebibliography}{10}

\bibitem{unitary}
Reza Akhtar, Megan Boggess, Tiffany Jackson-Henderson, Isidora Jim{\'e}nez, Rachel Karpman, Amanda Kinzel, and Dan Pritikin, \emph{On the unitary {Cayley} graph of a finite ring}, The Electronic Journal of Combinatorics \textbf{16} (2009), no.~1, R117.

\bibitem{anderson2021graphs}
David Anderson, Thangaraj Asir, Ayman Badawi, and Tamizh Chelvam, \emph{Graphs from rings}, Springer, 2021.

\bibitem{chen2022unitary}
Bocong Chen and Jing Huang, \emph{On unitary {Cayley} graphs of matrix rings}, Discrete Mathematics \textbf{345} (2022), no.~1, 112671.

\bibitem{guterman2021}
Alexander Guterman, Marianne Johnson, Mark Kambites, and Artem Maksaev, \emph{Linear functions preserving green's relations over fields}, Linear Algebra and its Applications \textbf{611} (2021), 310--333.

\bibitem{kiani2012unitary}
Dariush Kiani and Mohsen Molla~Haji Aghaei, \emph{On the unitary {Cayley} graph of a ring}, Electron. J. Comb. (2012), P10--P10.

\bibitem{klotz2007some}
Walter Klotz and Torsten Sander, \emph{Some properties of unitary {Cayley} graphs}, The Electronic Journal of Combinatorics \textbf{14} (2007), no.~1, R45.

\bibitem{MARUSIC198349}
Dragan Marušič, \emph{Hamiltonian circuits in {Cayley} graphs}, Discrete Mathematics \textbf{46} (1983), no.~1, 49--54.

\bibitem{pst-gcd-new-2}
Yotsanan Meemark and Songpon Sriwongsa, \emph{Perfect state transfer in unitary {C}ayley graphs over local rings}, Transactions on Combinatorics \textbf{3} (2014), no.~4, 43--54.

\bibitem{Migler01102006}
Theresa Migler, Kent~E. Morrison, and Mitchell Ogle, \emph{How much does a matrix of rank k weigh?}, Mathematics Magazine \textbf{79} (2006), no.~4, 262--271.

\bibitem{minavc2024gcd}
J{\'a}n Min{\'a}{\v{c}}, Tung~T. Nguyen, and Nguyen~Duy Tan, \emph{On the gcd graphs over polynomial rings}, Canadian Journal of Mathematics (2025), 1–28.

\bibitem{githubrepo}
Dung Nguyen and Tung~T. Nguyen, \emph{Gcd-graphs over matrix rings}, \url{https://github.com/nguyend77/gcd-graphs-matrix-rings}, 2026.

\bibitem{nguyengcd2026}
Tung~T. Nguyen and Nguyen~Duy T{\^a}n, \emph{On gcd-graphs over finite commutative rings}, Journal of Algebra and Its Applications. In press (2026).

\bibitem{nguyen2026u}
Tung~T Nguyen and Nguyen~Duy T{\^a}n, \emph{On ${U}$-unitary {Cayley} graphs over finite rings}, arXiv preprint arXiv:2603.21239 (2026).

\bibitem{pst-gcd-new}
Issaraporn Thongsomnuk and Yotsanan Meemark, \emph{Perfect state transfer in unitary {C}ayley graphs and gcd-graphs}, Linear and Multilinear Algebra \textbf{67} (2019), no.~1, 39--50.

\end{thebibliography}
\end{document}